\newtheorem{theorem}{Theorem}[section]
\newtheorem{lemma}{Lemma}[section]
\newtheorem{corollary}{Corollary}[section]
\newtheorem{remark}{Remark}[section]
\newcommand{\del}{\nabla}
\newcommand{\laplace}{\triangle}
\newcommand{\ulp} {\underline{\partial}}
\newcommand{\pa} {{\partial_0 }}
\newcommand{\bfe} {{\mathbf e}}
\newcommand{\NSc} {\text{NSc}}
\newcommand{\ulx} {{\underline x}}
\def\be{\begin{equation}}
	\def\ee{\end{equation}}
\def\endproof{{\hfill $\square$}\medskip}
\begin{document}
	\title{Three Balls Theorem for Eigenfunctions of Dirac Operator in Clifford Analysis}
	
	\author{Weixiong Mai \& Jianyu Ou$^*$}\thanks{$^*$ Corresponding author}
	\begin{abstract}
	In this paper we establish the three balls theorem for functions $u$ satisfying $Du=\lambda u$ in Clifford analysis, where $D$ is the Dirac operator. As an application, we generalize Hadamard's three circles theorem to monogenic function in $\mathbb R^{n+1}.$\\
	
	\noindent {Key words:} Monogenic Functions, Three Circles Theorem, Frequency Function, Monotonicity
	
	\end{abstract}

	\maketitle

	\section{Introduction}	
	The famous Hadamard three circles theorem for holomrophic functions is stated as follows:
	\begin{theorem}[Hadamard,1896]
		Let $f$ be a holomorphic function on the annulus $\{z\in \mathbb C: r_1\leq|z|\leq r_3\}$ with  $0<r_1<r_2<r_3<\infty.$ Denote by $M(r)$ the maximum of $|f(z)|$ on the circle $|z|=r.$ Then
		\begin{align*}
			\left\{M(r_2)\right\}^{\log{\frac{r_3}{r_1}}}\leq \left\{M(r_1)\right\}^{\log{\frac{r_3}{r_2}}}\left\{M(r_3)\right\}^{\log{\frac{r_2}{r_1}}}.
		\end{align*}
	\end{theorem}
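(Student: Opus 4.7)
The plan is to reformulate the conclusion as the convexity of $\log M(r)$ as a function of $\log r$: dividing the logarithm of the target inequality by $\log(r_3/r_1)$ gives
\begin{equation*}
\log M(r_2) \;\leq\; \frac{\log(r_3/r_2)}{\log(r_3/r_1)}\log M(r_1) + \frac{\log(r_2/r_1)}{\log(r_3/r_1)}\log M(r_3),
\end{equation*}
and the classical route to this convexity is the maximum modulus principle applied to a carefully chosen auxiliary function.

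First I would fix integers $m \in \mathbb{Z}$ and $n \in \mathbb{N}$ and consider the auxiliary function $g(z) = z^m f(z)^n$, which is holomorphic on the annulus $\{r_1 \leq |z| \leq r_3\}$ since this annulus avoids the origin. Although $z^m$ may have a pole at $z=0$ for negative $m$, the modulus $|g(z)| = |z|^m |f(z)|^n$ is single-valued and continuous on the closed annulus. Applying the maximum modulus principle to $g$ therefore yields, for every $z$ with $|z|=r_2$,
\begin{equation*}
r_2^m M(r_2)^n \;\leq\; \max\!\bigl(r_1^m M(r_1)^n,\; r_3^m M(r_3)^n\bigr).
\end{equation*}

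Next I would choose the rational ratio $m/n$ so as to balance the two quantities on the right, namely so that $r_1^m M(r_1)^n = r_3^m M(r_3)^n$; this forces $m/n$ to approximate the real number
\begin{equation*}
\alpha \;:=\; -\frac{\log M(r_3) - \log M(r_1)}{\log r_3 - \log r_1}.
\end{equation*}
Substituting back into the previous display, dividing by $n$, and passing to the limit along a rational sequence $m/n \to \alpha$ reduces the bound to $\log M(r_2) \leq \log M(r_1) + \alpha \log(r_1/r_2)$, which is exactly the convexity inequality above.

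Finally, exponentiating recovers Hadamard's original statement. The only technical subtlety is the degenerate case where some $M(r_i) = 0$, but then the identity theorem forces $f \equiv 0$ and the inequality is trivial; otherwise all logarithms are finite and the rational approximation argument goes through without further difficulty. The main conceptual step, and in my view the only genuine obstacle, is recognizing that one must work with integer powers of $z$ and $f$ and then pass to a limit, thereby sidestepping the multi-valuedness that would arise from writing $z^\alpha$ directly for a non-integer exponent $\alpha$.
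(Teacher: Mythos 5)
Your proof is correct, but note that the paper does not actually prove this statement: Theorem 1.1 is quoted as classical background, with its proof deferred to the references (Duren; Protter--Weinberger), so there is no in-paper argument to compare against. Your route --- applying the maximum modulus principle on the annulus to $g(z)=z^{m}f(z)^{n}$ with $m\in\mathbb Z$, $n\in\mathbb N$, balancing the two boundary maxima by choosing $m/n$ close to $\alpha=-\frac{\log M(r_3)-\log M(r_1)}{\log r_3-\log r_1}$, and passing to the limit over rationals --- is one of the standard proofs, and it does yield exactly the convexity of $\log M$ as a function of $\log r$, which upon exponentiation is the stated inequality. All the steps check out: for integer $m$ the function $z^{m}$ is itself single-valued and holomorphic on the annulus (so the aside about only the modulus being single-valued is unnecessary), the balancing condition gives precisely your $\alpha$, the two terms in the maximum converge to a common value as $m/n\to\alpha$, and the degenerate case $M(r_i)=0$ is correctly dispatched by the identity theorem. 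It is worth observing that your auxiliary function $z^{m}f(z)^{n}$ is exactly the kind of device the paper alludes to when it says most proofs are ``based on the commutativity in the algebra of holomorphic functions''; such powers and products have no analogue for Clifford-algebra-valued monogenic functions, which is why the paper must develop the frequency-function method for its actual main results.
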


	\noindent The theorem was originally announced by J. Hadamard in \cite{Hadamard}. Its standard proof could be found in e.g. \cite{Duren,Protter-Weinberger}.  It could be shown that the three circles theorem also holds for harmonic functions, as well as subharmonic functions, in $n$-dimensional Euclidean spaces (see e.g. \cite{Protter-Weinberger}). The majority of proofs of Hadamard's three circles theorem is usually based on the commutativity in the algebra of holomorphic functions.
	
	In recent years, Hadamard's three circles theorem has been generalized to solutions of various partial differential equations by the frequency function method, which was first introduced by F. Almgren in \cite{Almgren}. The frequency function method leads to an $L^2$-version of Hadamard's three circles (or balls) theorem. It has been shown that the frequency function method and Hadamard's three circles theorem have powerful applications in the study of unique continuation for elliptic equations (see e.g. \cite{Garofalo-Lin}). Hence, Hadamard's three circles theorem has many interesting applications in partial differential equations and differential geometry (see e.g. \cite{Garofalo-Lin,Garofalo-Lin1,Zhu,Alessandrini-Rondi-Rosset-Vessella,Xu,Liu,Colding-Minicozzi, Ou} and the references therein).
	
	In this paper we consider a generalization of three circles theorem about Dirac operator in Clifford analysis. Let $\{\bfe_1,...,\bfe_n\}$ be an orthonormal basis of $\mathbb R^n$ associated with the rule
	\begin{align*}
		\bfe_i\bfe_j+\bfe_j\bfe_i=-2\delta_{ij},\quad 1\leq i,j\leq n,
	\end{align*}
	where $\delta_{ij}$ is the Kronecker symbol. Denote by $ \mathbb R^{(n)}$ the Clifford algebra generated by $\{\bfe_1,...,\bfe_n\}$ over $\mathbb R,$ whose elements are of the form $x= \sum_{A} x_A\bfe_{A} $, where $A=\{1\leq j_1<j_2<\cdots<j_l\leq n\}$ runs over all ordered subsets of $\{1,...,n\},x_A\in\mathbb R$ and $x_\emptyset=x_0$ with the identity element $\bfe_\emptyset=\bfe_0=1$ (see \S 2 for a brief introduction to Clifford algebra). The Dirac operator is defined as 
	\begin{align*}
		D = \partial_0 +\sum_{j=1}^n \partial_j\bfe_j = \frac{\partial}{\partial x_0} + \sum_{A}\frac{\partial}{\partial x_j}\bfe_j.
	\end{align*}
	Suppose that $u(x)=\sum_{A}u_A(x)\bfe_A,$ and $\lambda\in\mathbb R.$ Then, $u$ is said to be an eigenfunction of $D$ if 
	\begin{align}\label{Du}
		D u = \lambda u,
	\end{align}
	when $\lambda \neq 0.$ When $\lambda =0$, (\ref{Du}) means that $u$ is left-monogenic, which generalizes the concept of holomorphic function to $\mathbb R^{n+1}$. Therefore, it is natural and significant to study whether there holds a three balls theorem for monogenic functions taking values in $\mathbb R^{(n)}$. Moreover, we will establish the three balls theorem for (\ref{Du}) with general $\lambda$ in this paper. Due to the noncommutativity of Clifford algebra, the standard argument of proving Hadamard's three circles theorem is invalid in our case (even for $\lambda=0$). In \cite{Abul-Constales-Morais-Zayed} Abul-Ez, Constales, Morais and Zayed proved a three balls theorem for the so-called special monogenic functions, where the used technique only works for their special case. The three lines theorem in Clifford analysis is given by Peetre and Sj\"olin in \cite{Peetre-Sjolin}. We also note that in the complex case the three circles theorem can be deduced by the three lines theorem, where the property of complex exponential function plays a role (see \cite[pages 386-387]{Ullrich}). However, this argument is also invalid in the Clifford algebra setting. Therefore, in this paper we will adapt the frequency function method to obtain the $L^2$-version of three balls theorem for $u$ satisfying (\ref{Du}). Consequently, we can deduce the $L^\infty$-version of three balls theorem by the Moser iteration method, which is a powerful tool in the theory of elliptic partial differential equations (see e.g. \cite{Han-Lin}). In particular, the used frequency function was proposed by Zhu in \cite{Zhu} (see also {\cite{Ou}}), which is different from the one given in \cite{Garofalo-Lin}.
	
	Our main results are stated as follows.
	\begin{theorem}\label{three-balls-L2}
		Suppose $u=\sum_{A}u_A\bfe_A$ satisfies $Du=\lambda u$, and $\alpha\geq2$. Then, there exist $0<r_1<r_2<2r_2<r_3<\infty$ such that\\
		\noindent when $\lambda\neq 0$
		\begin{align}\label{3balls-1}
			\int_{B_{r_2}} |u(x)|^2 dx \leq C_3 \left(\int_{B_{r_1}} |u(x)|^2dx\right)^{\frac{C_1}{C_1+C_2}} \left(\int_{B_{r_3}} |u(x)|^2dx\right)^{\frac{C_2}{C_1+C_2}},
		\end{align}
		where $C_1=\frac{1}{\log\frac{2r_2}{r_1}}, C_2=\frac{1}{\log \frac{r_3}{2r_2}}$ and \\$C_3=\frac{r_1^{2\alpha\frac{C_1}{C_1+C_2}}r_3^{2\alpha\frac{C_2}{C_1+C_2}}}{3^{\alpha}r_2^{2\alpha}}e^{\frac{\left(\frac{a}{2}(r_3^2-(2r_2)^2)+b(r_3-2r_2)\right)}{(\alpha+1)C_2(C_1+C_2)}-\frac{\left(\frac{a}{2}((2r_2)^2-r_1^2)+b(2r_2-r_1)\right)}{(\alpha+1)C_1(C_1+C_2)}}$ with $a=\frac{2|\lambda|^2+|\lambda|}{3},b=\frac{5(\alpha+1)}{3}|\lambda|-\frac{2|\lambda|+1}{9}$ and $c=\frac{2(\alpha+1)(\alpha+n)}{3}-\frac{5(\alpha+1)}{18}+\frac{2|\lambda|+1}{54|\lambda|};$\\
		\noindent when $\lambda =0$
		\begin{align}\label{3balls-2}
			\int_{B_{r_2}} |u(x)|^2 dx \leq C_4\left(\int_{B_{r_1}}|u(x)|^2dx\right)^\frac{C_1}{C_1+C_2}\left(\int_{B_{r_3}}|u(x)|^2dx\right)^\frac{C_2}{C_1+C_2},
		\end{align}
		where $C_1,C_2$ are given as above, and $C_4=\frac{r_1^{2\alpha\frac{C_1}{C_1+C_2}}r_3^{2\alpha\frac{C_2}{C_1+C_2}}}{3^{\alpha}r_2^{2\alpha}}.$
	\end{theorem}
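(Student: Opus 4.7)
The overall strategy is to prove log-convexity (with an explicit correction term) for the weighted $L^2$ quantity
\begin{align*}
H(r):=\int_{B_r}(r^2-|x|^2)^{\alpha}|u(x)|^2\,dx,\qquad |u|^2=\sum_A u_A^2,
\end{align*}
following Zhu's variant of the frequency function method \cite{Zhu}. The three balls inequalities (\ref{3balls-1}) and (\ref{3balls-2}) for the unweighted integrals will then follow by elementary envelope estimates that relate $H(r)$ to $\int_{B_r}|u|^2\,dx$, the choice of the weight $(r^2-|x|^2)^{\alpha}$ being motivated directly by the appearance of the factors $r_i^{2\alpha}$ and $3^{\alpha}r_2^{2\alpha}$ in $C_3$ and $C_4$.

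To implement the monotonicity, I would introduce an energy companion to $H$, something of the form
\begin{align*}
D(r):=\int_{B_r}(r^2-|x|^2)^{\alpha+1}|\nabla u|^2\,dx+(\text{lower-order term in }\lambda u),
\end{align*}
and form the frequency quotient $N(r):=D(r)/H(r)$ (possibly after an $r$-normalization). Differentiating $H$ under the integral sign (the boundary contribution vanishes because the weight is zero on $\partial B_r$) and performing a Rellich-type integration by parts against the radial vector field $\sum_i x_i\partial_i$, one expresses $H'(r)$ in terms of $D(r)$ together with first-order terms that are then converted using the eigenvalue equation $Du=\lambda u$. The same procedure applied to $D(r)$ yields a differential inequality of Grönwall form, roughly $N'(r)\geq-(ar+b)$ with $a$ and $b$ exactly the constants of the theorem (the constant $c$ likely enters as an $r$-independent lower bound for $N(r)$ used to close the calculation). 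Integrating gives almost-monotonicity of $N$, which in turn produces log-convexity of $H(r)$ in $\log r$ modulo an explicit correction, the factor $(\alpha+1)$ in the denominator of the exponent in $C_3$ arising naturally from the $(\alpha+1)$-st power of the weight appearing in $D(r)$.

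Integrating this two-sided differential inequality across the three radii $r_1<2r_2<r_3$ yields
\begin{align*}
H(2r_2)\leq H(r_1)^{\frac{C_1}{C_1+C_2}}H(r_3)^{\frac{C_2}{C_1+C_2}}\cdot\exp(\text{correction}),
\end{align*}
where the correction is precisely $\tfrac{\frac{a}{2}(r_3^2-(2r_2)^2)+b(r_3-2r_2)}{(\alpha+1)C_2(C_1+C_2)}-\tfrac{\frac{a}{2}((2r_2)^2-r_1^2)+b(2r_2-r_1)}{(\alpha+1)C_1(C_1+C_2)}$. To pass from $H$ to the unweighted integrals, I would use $H(r_i)\leq r_i^{2\alpha}\int_{B_{r_i}}|u|^2\,dx$ for $i=1,3$, and $H(2r_2)\geq(3r_2^2)^{\alpha}\int_{B_{r_2}}|u|^2\,dx$ since $(2r_2)^2-|x|^2\geq 3r_2^2$ on $B_{r_2}$. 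Combining these three envelopes produces (\ref{3balls-1}); the case $\lambda=0$ is identical but with $a=b=0$, so the exponential correction collapses and one recovers (\ref{3balls-2}) with $C_4$ as stated.

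The main obstacle will be the explicit monotonicity inequality for $N(r)$: the non-commutativity of the Clifford product makes the Rellich-type identity delicate, since cross-terms in integration by parts must be symmetrized carefully to extract the sign needed for monotonicity, and the first-order equation $Du=\lambda u$ must be used to eliminate derivative terms without generating error terms other than those accounted for in $a,b,c$. The hypothesis $\alpha\geq2$ will be used both to make the weighted integrals absolutely convergent after integration by parts and to guarantee that boundary contributions coming from the $(r^2-|x|^2)^{\alpha-1}$-type terms in $H'(r)$ are controlled. The most demanding bookkeeping will be ensuring that the Grönwall-type correction emerges with exactly the constants $a$ and $b$ in the statement, so that the final constant $C_3$ matches the theorem.
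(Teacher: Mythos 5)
Your proposal follows essentially the same route as the paper: the same weighted quantity $H(r)$, the companion $I(r)=\int(r^2-|x|^2)^{\alpha+1}|\nabla u|^2+\sum_A\int u_A\Delta u_A(r^2-|x|^2)^{\alpha+1}$ obtained by converting $\Delta u_A=\lambda(2\partial_0 u_A-\lambda u_A)$ via the eigenvalue equation, the Rellich-type identity for $I'(r)$, almost-monotonicity of $N=I/H$, integration over $[r_1,2r_2]$ and $[2r_2,r_3]$, and the same envelope bounds $H(r)\leq r^{2\alpha}h(r)$, $H(2r)\geq 3^{\alpha}r^{2\alpha}h(r)$. The only cosmetic difference is that the paper's differential inequality is $N'\geq-6|\lambda|N-(\text{quadratic in }r)$, so the monotone quantity is $e^{6|\lambda|r}(N(r)+p(r))$ with $p(r)=ar^2+br+c$ (all three constants are coefficients of the integrating-factor polynomial, not a lower bound for $N$), but this does not change the approach.
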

	
	For the case $\lambda=0$, (\ref{3balls-2}) is the $L^2$-version of the three balls inequality for monogenic functions. Then, by the subharmonic inequality we can deduce the following result.
	\begin{corollary}\label{three-spheres-infty}
		Suppose $u=\sum_{A}u_A\bfe_A$ satisfies $Du=0$, and $\alpha\geq 2$. Then, there exist $0<r_1<r_2<2r_2<r_3<\infty$ such that
		\begin{align}\label{3balls-infty}
			||u||_{L^\infty(B_{r_2})} 
			\leq  3^\frac{n}{2}C_4^\prime (r_3-2r_2)^{-\frac{n}{2}}r^\frac{n}{2}_3||u||_{L^\infty({B_{r_1}})}^{\frac{C_1^\prime}{C_1^\prime+C_2^\prime}} ||u||_{L^\infty(B_{r_3})}^{\frac{C_2^\prime}{C_1^\prime+C_2^\prime}},
		\end{align}
		where $C_1^\prime=\frac{1}{\log \frac{2(r_3+r_2)}{3r_1}},$ $C_2^\prime= \frac{1}{\log \frac{3r_3}{2(r_3+r_2)}}$ and $C_4^\prime=\frac{3^\alpha r_1^{2\alpha\frac{C_1}{C_1+C_2}}r_3^{2\alpha\frac{C_2}{C_1+C_2}}}{4^\alpha((r_3+r_2))^{2\alpha}}.$
	\end{corollary}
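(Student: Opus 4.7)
The plan is to upgrade Theorem \ref{three-balls-L2} (case $\lambda=0$) to an $L^\infty$ estimate using subharmonicity of $|u|^2$. Since $Du=0$, applying the conjugate Dirac operator $\overline{D} = \partial_0 - \sum_j \partial_j \bfe_j$ yields $\overline{D}Du = \Delta u = 0$, so every scalar component $u_A$ is harmonic on $\mathbb{R}^{n+1}$ (the cross terms $\bfe_j\bfe_k+\bfe_k\bfe_j$ cancel and $\bfe_j^2=-1$ recovers the Laplacian). A direct calculation then gives
\[
\Delta|u|^2 = 2\sum_A |\nabla u_A|^2 \geq 0,
\]
so $|u|^2$ is subharmonic and, whenever $B_\rho(x)\subset B_R$, the sub-mean-value inequality reads $|u(x)|^2 \le |B_\rho|^{-1}\int_{B_\rho(x)} |u|^2\,dy$.

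The key quantitative step is to apply Theorem \ref{three-balls-L2} not to the original triple $(r_1,r_2,r_3)$ but to the shifted triple $(r_1,\tilde r_2,r_3)$, where
\[
\tilde r_2 := \frac{r_3+r_2}{3}.
\]
The ordering $r_1<\tilde r_2<2\tilde r_2<r_3$ follows immediately from $r_1<r_2$ and $2r_2<r_3$. Substituting into the formulas for $C_1,C_2,C_4$ in Theorem \ref{three-balls-L2} reproduces precisely the logarithms
\[
\log\frac{2\tilde r_2}{r_1}=\log\frac{2(r_3+r_2)}{3r_1},\qquad \log\frac{r_3}{2\tilde r_2}=\log\frac{3r_3}{2(r_3+r_2)},
\]
namely the reciprocals of $C_1'$ and $C_2'$, and simplifies $C_4$ to a constant multiple of $r_1^{2\alpha\theta_1}r_3^{2\alpha\theta_2}/(r_3+r_2)^{2\alpha}$ with $\theta_i:=C_i'/(C_1'+C_2')$, which is the shape of $C_4'$.

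Finally, for each $x\in B_{r_2}$ the ball $B_\rho(x)$ of radius $\rho:=\tilde r_2-r_2=(r_3-2r_2)/3$ is contained in $B_{\tilde r_2}$, so the sub-mean-value inequality gives $\|u\|_{L^\infty(B_{r_2})}^2 \le C\rho^{-n}\int_{B_{\tilde r_2}} |u|^2\,dy$. Combining this with the shifted $L^2$ three balls inequality and estimating $\int_{B_{r_j}}|u|^2\le |B_{r_j}|\cdot\|u\|_{L^\infty(B_{r_j})}^2$ for $j=1,3$ — the volume constants collapse thanks to $\theta_1+\theta_2=1$ — and then bounding $r_1^{n\theta_1}r_3^{n\theta_2}\le r_3^n$ using $r_1\le r_3$ produces the factor $(r_3-2r_2)^{-n/2}r_3^{n/2}$ after taking a square root, while the substitution $\rho=(r_3-2r_2)/3$ contributes the leading $3^{n/2}$. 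I do not anticipate any serious obstacle: the whole argument is routine bookkeeping once one identifies $\tilde r_2=(r_3+r_2)/3$ as the correct intermediate radius and exploits the subharmonicity of $|u|^2$, the only non-obvious aspect being the calibration of $\tilde r_2$ to reproduce the stated constants $C_1',C_2',C_4'$.
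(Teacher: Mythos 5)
Your proposal is correct and follows essentially the same route as the paper: subharmonicity of $|u|^2$ (via harmonicity of each component $u_A$), the sub-mean-value inequality on balls of radius $\tilde r_2-r_2=(r_3-2r_2)/3$, and an application of the $L^2$ three balls inequality with the shifted middle radius $\tilde r_2=(r_3+r_2)/3$, followed by converting the outer $L^2$ norms back to $L^\infty$ using that the exponents sum to one. The only discrepancies are cosmetic bookkeeping of constants (your direct substitution gives $3^\alpha(r_3+r_2)^{-2\alpha}$ rather than the paper's $3^\alpha 4^{-\alpha}(r_3+r_2)^{-2\alpha}$, and the dimension exponent should strictly be $n_1=n+1$), issues already present in the paper's own statement versus its proof.
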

	
 \begin{remark}
 Corollary \ref{three-spheres-infty} is a generalization of Hadamard's three circles theorem in the setting of Clifford algebra. On one hand, Corollary \ref{three-spheres-infty} holds for general monogenic functions, which is more general than that proposed in \cite{Abul-Constales-Morais-Zayed}. On the other hand, since the constant on the right-hand side of (\ref{3balls-infty}) is bigger than one, in this sense our result is weaker than the classical Hadamard's three circles theorem. It would be interesting whether there holds a complete generalization of the classical Hadamard's three circles theorem for general monogenic functions. 
\end{remark}

	It seems that the approach in this paper could not lead to this complete generalization. Nevertheless, the frequency function method can be used to obtain a $L^\infty$-version of the three balls theorem for $\lambda\neq 0,$ which is stated as follows.
	
	\begin{corollary}\label{three-spheres-infty1}
		Suppose $u=\sum_{A}u_A\bfe_A$ satisfies $Du=\lambda u$ with $\lambda\neq 0$, and $\alpha\geq 2$. Then, there exist $0<r_1<r_2<2r_2<r_3<1$ such that
		\begin{align*}
			||u||_{L^\infty(B_{r_2})} \leq  M^\prime C_3^\prime (r_3-2r_2)^{-\frac{n}{2}}r^\frac{n}{2}_3||u||_{L^\infty({B_{r_1}})}^{\frac{C_1^\prime}{C_1^\prime+C_2^\prime}} ||u||_{L^\infty(B_{r_3})}^{\frac{C_2^\prime}{C_1^\prime+C_2^\prime}},
		\end{align*}
		where $M^\prime$ is a positive constant, $C_1^\prime=\frac{1}{\log \frac{2(r_3+r_2)}{3r_1}},$ $C_2^\prime= \frac{1}{\log \frac{3r_3}{2(r_3+r_2)}}$\\
		\noindent and $C_3^\prime=\frac{3^\alpha r_1^{2\alpha\frac{C_1}{C_1+C_2}}r_3^{2\alpha\frac{C_2}{C_1+C_2}}}{4^\alpha((r_3+r_2))^{2\alpha}}e^{\frac{\left(\frac{a}{2}(r_3^2-(\frac{2(r_3+r_2)}{3})^2)+b(r_3-\frac{2(r_3+r_2)}{3})\right)}{(\alpha+1)C_2(C_1+C_2)}-\frac{\left(\frac{a}{2}((\frac{2(r_3+r_2)}{3})^2-r_1^2)+b(\frac{2(r_3+r_2)}{3}-r_1)\right)}{(\alpha+1)C_1(C_1+C_2)}}.$
	\end{corollary}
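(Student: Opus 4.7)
The plan is to reduce Corollary \ref{three-spheres-infty1} to the $L^2$ three balls inequality (Theorem \ref{three-balls-L2}, case $\lambda \neq 0$) by sandwiching it between $L^\infty$-to-$L^2$ embeddings on the outer balls and a Moser-type $L^2$-to-$L^\infty$ estimate on the inner ball. Concretely, I would apply Theorem \ref{three-balls-L2} to the triple $\rho_1 = r_1$, $\rho_2 = (r_2+r_3)/3$, $\rho_3 = r_3$. Since $r_3 > 2r_2$, one checks $r_2 < \rho_2 < 2\rho_2 < \rho_3$, so Theorem \ref{three-balls-L2} applies, and the ratios $\frac{2\rho_2}{\rho_1} = \frac{2(r_3+r_2)}{3r_1}$ and $\frac{\rho_3}{2\rho_2} = \frac{3r_3}{2(r_3+r_2)}$ reproduce exactly the exponents $C_1'$ and $C_2'$ in the statement.

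For the inner step I need each component $u_A$ of $u$ to satisfy a scalar elliptic equation. A direct computation using $\bfe_i\bfe_j + \bfe_j\bfe_i = -2\delta_{ij}$ gives $\bar{D}D = \partial_0^2 + \sum_{j=1}^n \partial_j^2 = \Delta$, where $\bar{D} = \partial_0 - \sum_{j=1}^n \partial_j \bfe_j$. Applying $\bar{D}$ to $Du = \lambda u$ and then substituting $\sum_{j=1}^n \partial_j u\,\bfe_j = \lambda u - \partial_0 u$ (i.e.\ the original equation rearranged) yields
\[
\Delta u - 2\lambda \partial_0 u + \lambda^2 u = 0,
\]
an equation with real constant coefficients which therefore decouples into the same scalar equation for every real component $u_A$. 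Standard interior regularity then applies to each $u_A$ separately.

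The Moser iteration (see e.g.\ \cite{Han-Lin}) then yields, for all $0 < r < \rho \leq r_3 < 1$,
\[
\|u_A\|_{L^\infty(B_r)} \leq M(\rho - r)^{-n/2}\|u_A\|_{L^2(B_\rho)},
\]
where $M = M(n, |\lambda|)$ is uniform thanks to the normalization $r_3 < 1$. Taking $r = r_2$, $\rho = \rho_2$ produces the announced factor $3^{n/2}(r_3 - 2r_2)^{-n/2}$ because $\rho_2 - r_2 = (r_3 - 2r_2)/3$, and summing over the finitely many indices $A$ transfers the estimate to $|u|$. Inserting this, together with the trivial bounds $\|u\|_{L^2(B_{\rho_k})} \leq |B_{\rho_k}|^{1/2}\|u\|_{L^\infty(B_{r_k})} \lesssim r_k^{n/2}\|u\|_{L^\infty(B_{r_k})}$ for $k = 1, 3$, into Theorem \ref{three-balls-L2} yields the desired inequality, with $C_3'$ arising from $C_3$ by the substitution $r_2 \mapsto (r_3+r_2)/3$. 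I expect the principal obstacle to be algebraic bookkeeping: tracking how the $\lambda$-dependent exponential in $C_3$ transforms under this radius substitution and verifying that the Moser constant, together with the conversion factors $r_1^{n/2}$ and $r_3^{n/2}$, can be absorbed into a single constant $M'$ without altering the Hadamard exponents $C_1'/(C_1'+C_2')$ and $C_2'/(C_1'+C_2')$.
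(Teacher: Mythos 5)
Your proposal is correct and follows essentially the same route as the paper: the same intermediate radius $\frac{r_2+r_3}{3}$, the same scalar equation $\laplace u_A = 2\lambda\partial_0 u_A - \lambda^2 u_A$ feeding a componentwise Moser iteration (the paper's Lemma 4.1), the $L^2$ three balls inequality applied to the triple $(r_1, \frac{r_2+r_3}{3}, r_3)$, and the conversion of the outer $L^2$ norms to $L^\infty$ norms via volume factors absorbed into $M'$. The only cosmetic difference is that the paper carries the exponent $n_1/2=(n+1)/2$ through the proof (matching the ambient dimension $n+1$) while its corollary statement, like your write-up, displays $n/2$.
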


	The paper is organized as follows. In \S 2 some basic notations and properties of Clifford algebra, and the frequency function are introduced. In \S 3 the monotonicity of frequency function $N(r)$ is proved. In \S 4 the main results are proved.

{\bf Acknowledgement} The first author was supported in part by NSFC Grant No. 11901594, and Guangdong Basic and Applied Basic Research Foundation Grant No. 2021A1515010351, and FRG Program of the Macau University of Science and Technology, No. FRG-22-076-MCMS. The second author was supported in part by the Natural Science Foundation of Fujian Province  (Grant No. 2022J05007) and the Fundamental Research Funds for the Central Universities (No. 20720220042).

	\section{Preliminaries}
	First we introduce some basic notations and properties of Clifford algebras.
	Let $\bfe_1,...,\bfe_n$ be basic elements satisfying
	$$
	\bfe_j \bfe_k+ \bfe_k \bfe_j =-2\delta_{jk}, \quad j,k=1,...,n,
	$$
	where $\delta_{jk}$ is the Kronecker delta function.
	Let $\mathbb R^n=\{\ulx =x_1\bfe_1+\cdots +x_n\bfe_n;x_j\in \mathbb R, 1\leq j\leq n\}$ be identical with the usual Euclidean space $\mathbb R^n,$ and $\mathbb R^{n+1}=\{x=x_0+\ulx:x_0\in\mathbb R,\ulx\in\mathbb R^{n}\}.$ 
	
	The real Clifford algebra $\mathbb R^{(n)}$ generated by $\{\bfe_1,...,\bfe_n\}$ is the associative algebra generated by $\{\bfe_1,...,\bfe_n\}$ over the real field $\mathbb R$.
	The elements of $\mathbb R^{(n)}$ are of the form $x=\sum_{A}x_A \bfe_A,$ where $A=\{1\leq j_1<j_2<\cdots<j_l\leq n\}$ runs over all ordered subsets of $\{1,...,n\}$, $x_A\in \mathbb R$ with $x_{\emptyset}=x_0,$ and $\bfe_A=\bfe_{j_1}\bfe_{j_2}\cdots \bfe_{j_l}$ with the identity element $\bfe_{\emptyset}=\bfe_0=1.$ Sc$(x):=x_0$ and NSc$(x):=x-\text{Sc}(x)$ are respectively called the scalar part and the non-scalar part of $x.$
	We denote the conjugate of $x\in\mathbb R^{(n)}$ by $\overline x=\sum_{A} x_A \overline \bfe_A$, where $\overline \bfe_A=\overline \bfe_{j_l}\cdots \overline \bfe_{j_2}\overline \bfe_{j_1}$ with $\overline \bfe_0=\bfe_0$ and $\overline \bfe_j=-\bfe_j$ for $j \neq 0.$ The norm of $x\in \mathbb R^{(n)}$ is defined as $|x|:=(\text{Sc } \overline x x)^\frac{1}{2}=(\sum_{A}|x_A|^2)^{\frac{1}{2}}.$  $x=x_0+\ulx\in \mathbb R^{n+1}$ is called a para-vector, and the conjugate of a para-vector $x$ is $\overline x=x_0-\ulx.$ If $x$ is a para-vector then $x^{-1}=\frac{\overline x}{|x|^2}.$ For more information about Clifford algebras, we refer to \cite{Brackx-Delanghe-Sommen}.
	
	The Dirac operator is defined as
	\begin{align*}
		D = \partial_0+\underline \partial=\partial_0+\sum_{j=1}^n\partial_j\bfe_j = \frac{\partial}{\partial x_0} +\sum_{j=1}^n \frac{\partial}{\partial x_j}\bfe_j.
	\end{align*}
	In Clifford analysis, the Dirac operator plays an important role since it gives rise to the development of monogenic function theory (see e.g. \cite{Brackx-Delanghe-Sommen}).
	
	In the following we introduce the frequency function used in this paper, which is proposed in \cite{Zhu}.
	Let $u=\sum_A u_A\bfe_A=u_0+\NSc (u)$ satisfy 
	\begin{align}\label{Du1}
		Du = \lambda u,
	\end{align}
	where $\lambda\in \mathbb R.$
	Note that
	\begin{align*}
		Du = & (\partial_0 +\ulp)(u_0+\NSc (u))\\
           = & \pa u_0 +\ulp u_0 +\pa \NSc(u) +\ulp \NSc(u)\\
           = & \lambda u\\
           = & \lambda u_0 +\lambda \NSc(u),
	\end{align*}
	which implies that $-\partial_0 u_0-\ulp\NSc(u) = \pa u_0+\pa\NSc(u) -\lambda u_0-\lambda \NSc(u).$
	Next we consider 
	\begin{align*}
		\laplace u &=\overline D Du\\ 
		&=\lambda (\pa -\ulp )(u_0+\NSc(u))\\
		&= \lambda ( \pa u_0 +\pa \NSc(u) -\ulp u_0-\ulp \NSc(u))\\
		&=\lambda (2\pa u_0 +2\pa \NSc(u) -\lambda u_0-\lambda \NSc(u)),
	\end{align*}
	which is equivalent to
	\begin{align}\label{lapU}
		\laplace u_A &= \lambda (2\pa u_A-\lambda u_A), \quad \text{for all possible index } A.
	\end{align}
	For $\alpha\geq 2,$ define 
	$$H(r)=\int_{B_r}|u|^2(r^2-|x|^2)^\alpha dx=\sum_A \int_{B_r}u_A^2(r^2-|x|^2)^\alpha dx=\sum_A\int u_A^2(r^2-|x|^2)^\alpha,$$
	where $B_r$ is the ball centered at the origin with radius $r.$
	For the sake of simplicity, we let $n_1=n+1$, and write $\int_{B_r}$ as $\int$ when there is no confusion.
	By taking the derivative for $H(r)$ with respect to $r$, we have
	\begin{align}\label{Hr}
		H^\prime(r)=\frac{2\alpha +n_1}{r}H(r) +\frac{1}{r(\alpha+1)} I(r),
	\end{align}
	where
	\begin{align}\label{Ir}
		I(r)=\int |\del u|^2(r^2-|x|^2)^{\alpha+1} dx+\sum_A \int u_A\laplace u_A (r^2-|x|^2)^{\alpha+1}dx.
	\end{align}
	The frequency function $N(r)$ is defined as 
	\begin{align*}
		N(r) = \frac{I(r)}{H(r)}.
	\end{align*}	
	\section{Monotonicity of frequency function }
	
	In the following lemma we prove a monotonic result for $N(r),$ which plays a crucial role in proving the main results.
	\begin{lemma}
		Suppose $u=\sum_{A}u_A\bfe_A$ satisfies $Du=\lambda u$. Then, for $\lambda \neq 0,$
		$e^{6|\lambda| r}(N(r)+p(r)) $ is nondescreasing, where
		$p(r)=ar^2+br+c$ with $a=\frac{2|\lambda|^2+|\lambda|}{3},b=\frac{5(\alpha+1)}{3}|\lambda|-\frac{2|\lambda|+1}{9}$ and $c=\frac{2(\alpha+1)(\alpha+n)}{3}-\frac{5(\alpha+1)}{18}+\frac{2|\lambda|+1}{54|\lambda|}.$ For $\lambda=0,$ we have that $N(r)$ is nondescreasing.
	\end{lemma}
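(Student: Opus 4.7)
The proof would follow the classical Almgren--Garofalo--Lin frequency-function scheme, applied component-wise via the scalar equation $\laplace u_A=\lambda(2\pa u_A-\lambda u_A)=2\lambda\pa u_A-\lambda^2 u_A$ from \eqref{lapU}. The aim is to bound $(\log N(r))'=I'(r)/I(r)-H'(r)/H(r)$ from below, in conjunction with the identity $H'(r)/H(r)=(n_1+2\alpha)/r+N(r)/(r(\alpha+1))$ that is equivalent to \eqref{Hr}. Throughout, set $\omega:=r^2-|x|^2$; since $\omega|_{\partial B_r}=0$, every boundary term in the integrations by parts below is killed.

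First, I would rewrite $I(r)$ by integrating the scalar identity $|\del u_A|^2+u_A\laplace u_A=\tfrac{1}{2}\laplace(u_A^2)$ against $\omega^{\alpha+1}$ and summing in $A$, obtaining
\[I(r)=2(\alpha+1)\sum_A\int u_A\,(x\cdot\del u_A)\,\omega^\alpha\,dx,\]
after which Cauchy--Schwarz yields $I(r)^2\le 4(\alpha+1)^2H(r)\sum_A\int(x\cdot\del u_A)^2\omega^\alpha dx$. Next, differentiating $I(r)$ in $r$ produces $I'(r)=2r(\alpha+1)\bigl[\int|\del u|^2\omega^\alpha dx+\sum_A\int u_A\laplace u_A\omega^\alpha dx\bigr]$. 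The crucial link is a Pohozaev-type identity: integrating $\sum_A\laplace u_A\,(x\cdot\del u_A)\omega^{\alpha+1}$ by parts twice and using $|x|^2=r^2-\omega$ produces
\[\sum_A\!\int\!\laplace u_A(x\cdot\del u_A)\omega^{\alpha+1}dx=\tfrac{n_1+2\alpha}{2}\!\int\!|\del u|^2\omega^{\alpha+1}dx-(\alpha+1)r^2\!\int\!|\del u|^2\omega^\alpha dx+2(\alpha+1)\!\sum_A\!\int(x\cdot\del u_A)^2\omega^\alpha dx,\]
which ties $I'(r)$ to the Cauchy--Schwarz ``denominator'' $\sum_A\int(x\cdot\del u_A)^2\omega^\alpha dx$.

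Combining the above, substituting $\laplace u_A=2\lambda\pa u_A-\lambda^2 u_A$, and using the Cauchy--Schwarz bound, the principal terms cancel exactly when $\lambda=0$, which gives $N'(r)\ge 0$ and hence the second assertion. For $\lambda\ne 0$, the substitution creates cross terms of the form $\lambda\int u_A\pa u_A\omega^{\alpha+1}dx$, $\lambda^2\int u_A^2\omega^{\alpha+1}dx$, $\lambda\int\pa u_A(x\cdot\del u_A)\omega^{\alpha+1}dx$ and their $\omega^\alpha$-analogues. Each is estimated by a weighted Cauchy inequality $2|ab|\le\varepsilon a^2+\varepsilon^{-1}b^2$ together with $|\pa u_A|^2\le|\del u|^2$ and $|x_0|\le r$, ending in a differential inequality of the form $N'(r)\ge -6|\lambda|N(r)-P(r)$ for some polynomial $P(r)$. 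Selecting $p(r)=ar^2+br+c$ so that $p'(r)+6|\lambda|p(r)\ge P(r)$ then gives
\[\frac{d}{dr}\!\left[e^{6|\lambda|r}(N(r)+p(r))\right]=e^{6|\lambda|r}\bigl[N'(r)+p'(r)+6|\lambda|(N(r)+p(r))\bigr]\ge 0,\]
which is the first assertion.

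\textbf{Main obstacle.} The delicate part is the bookkeeping in the $\lambda\ne 0$ estimate: every $\varepsilon$ picked in a weighted Cauchy inequality feeds directly into the coefficients of $r^2$, $r$ and the constant term of $P(r)$, and therefore into $a$, $b$, $c$. The specific values $a=(2|\lambda|^2+|\lambda|)/3$, $b=\tfrac{5(\alpha+1)}{3}|\lambda|-\tfrac{2|\lambda|+1}{9}$, $c=\tfrac{2(\alpha+1)(\alpha+n)}{3}-\tfrac{5(\alpha+1)}{18}+\tfrac{2|\lambda|+1}{54|\lambda|}$ correspond to a particular (essentially optimal) calibration of these splitting parameters, and the calculation-heavy task is to verify that with this choice the required inequality $p'(r)+6|\lambda|p(r)\ge P(r)$ actually holds with the $P(r)$ that the Pohozaev/Cauchy--Schwarz step really produces.
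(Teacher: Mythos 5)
Your proposal is correct and follows essentially the same route as the paper's proof: the identity $I(r)=2(\alpha+1)\sum_A\int u_A\langle x,\del u_A\rangle(r^2-|x|^2)^\alpha dx$, the Rellich--Pohozaev rearrangement of $I'(r)$, the Cauchy--Schwarz bound $I^2\le 4(\alpha+1)^2KH$, the weighted Cauchy estimates on the $\lambda$-cross terms (using $|x_0|\le r$ and $|\pa u_A|\le|\del u_A|$), and finally the integrating-factor argument $p'+6|\lambda|p\ge P$ are exactly the steps the paper carries out. The only part you defer --- the explicit bookkeeping producing $P(r)=(4|\lambda|^3+2|\lambda|^2)r^2+10(\alpha+1)|\lambda|^2r+4(\alpha+1)(\alpha+n_1)|\lambda|$, for which the stated $a,b,c$ satisfy $p'+6|\lambda|p=P$ exactly --- is precisely the computation occupying the body of the paper's proof, so your plan is a faithful blueprint of it.
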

	\begin{proof}

		Firstly, by taking the derivative for $I(r)$ in (\ref{Ir}) with respect to $r,$ we have
		\begin{align}\label{derI}
            \begin{split}
			I^\prime(r)  
            = & 2(\alpha+1)\int r|\del u|^2(r^2-|x|^2)^{\alpha}dx + 2(\alpha+1)\sum_A \int r u_A\laplace u_A (r^2-|x|^2)^\alpha dx\\
            = & \frac{2(\alpha+1)}{r}\int |\del u|^2(r^2-|x|^2)^{\alpha+1} - \frac{1}{r}\int |\del u|^2 \langle x,\del (r^2-|x|^2)^{\alpha+1}\rangle dx\\
            &+ 2(\alpha+1)\sum_A \int r u_A\laplace u_A (r^2-|x|^2)^\alpha dx.
            \end{split}
        \end{align}
        Using integration by parts, we have
        \begin{align*}
            I^\prime(r) = & \frac{2(\alpha+1)+n_1}{r}\int |\del u|^2(r^2-|x|^2)^{\alpha+1} + \frac{1}{r}\int \langle \del |\del u|^2,x\rangle  (r^2-|x|^2)^{\alpha+1} dx\\
            &+ 2(\alpha+1)\sum_A \int r u_A\laplace u_A (r^2-|x|^2)^\alpha dx.
        \end{align*}
        Note that
        \begin{align*}
           &\int \langle \del|\del u_A|^2,x\rangle (r^2-|x|^2)^{\alpha+1} dx \\
           = & 2\sum_{i,j}\int x_i\partial_j u_A \partial_i\partial_ju_A (r^2-|x|^2)^{\alpha+1} dx\\
           = & -2\int |\del u_A|^2(r^2-|x|^2)^{\alpha+1}dx -2\int \laplace u_A \langle \del u_A,x\rangle (r^2-|x|^2)^{\alpha+1}\\
           & + 4(\alpha+1)\int \langle \del u_A,x\rangle^2 (r^2-|x|^2)^\alpha dx. 
        \end{align*}
        Consequently, 
        \begin{align*}
            I^\prime(r) = & \frac{2\alpha+n_1}{r} \int |\del u|^2(r^2-|x|^2)^{\alpha+1} -\frac{2}{r}\sum_A\int \laplace u_A\langle \del u_A, x\rangle (r^2-|x|^2)^{\alpha+1}\\
            & +\frac{4(\alpha+1)}{r}\sum_A\int \langle \del u_A, x\rangle^2(r^2-|x|^2)^\alpha dx + 2(\alpha+1)\sum_A \int ru_A\laplace u_A(r^2-|x|^2)^\alpha dx. 
        \end{align*}
        
        Using (\ref{Ir}), we can rewrite $I^\prime(r)$ as
        \begin{align*}
			I^\prime(r)
			=&\frac{2\alpha+n_1}{r}I(r)+\frac{4(\alpha+1)}{r}\sum_A\int \langle \del u_A,x\rangle^2 (r^2-|x|^2)^\alpha dx\\
			&-\frac{2}{r}\sum_A\int \laplace u_A \langle \del u_A,x\rangle (r^2-|x|^2)^{\alpha+1}dx-\frac{n_1-2}{r}\sum_A\int u_A\laplace u_A (r^2-|x|^2)^{\alpha+1}dx\\
			& + \frac{2(\alpha+1)}{r}\sum_A \int |x|^2u_A \laplace u_A (r^2-|x|^2)^\alpha dx.
		\end{align*}
		We also note that
		\begin{align}\label{eq1}
			\begin{split}
				&-\frac{2}{r}\int \laplace u_A\langle \del u_A,x\rangle (r^2-|x|^2)^{\alpha+1}dx\\
				=&-\frac{2\lambda}{r}\int (2\partial_0 u_A - \lambda u_A)\langle \del u_A,x\rangle (r^2-|x|^2)^{\alpha+1}dx\\
				=&-\frac{4\lambda}{r}\int \partial_0u_A\langle \del u_A,x\rangle (r^2-|x|^2)^{\alpha+1}dx + \frac{2\lambda^2}{r}\int u_A\langle \del u_A,x\rangle (r^2-|x|^2)^{\alpha+1}dx\\
				\geq & -\frac{2|\lambda|}{r} \int |\partial_0u_A|^2|x|(r^2-|x|^2)^{\alpha+1}dx -\frac{2|\lambda|}{r}\int|\del u_A|^2|x|(r^2-|x|^2)^{\alpha+1}dx\\
				& - \frac{|\lambda|^2}{r}\int u_A^2 |x|(r^2-|x|^2)^{\alpha+1}dx -\frac{|\lambda|^2}{r}\int |\del u_A|^2 |x|(r^2-|x|^2)^{\alpha+1}dx\\
				\geq &-4|\lambda| \int |\del u_A|^2 (r^2-|x|^2)^{\alpha+1}dx -|\lambda|^2 \int u^2_A(r^2-|x|^2)^{\alpha+1}dx\\
                & -|\lambda|^2\int u_A^2(r^2-|x|^2)^{\alpha+1}dx,
			\end{split}
		\end{align}
		and
		\begin{align}\label{eq2}
				&-\frac{n_1-2}{r}\int u_A\laplace u_A(r^2-|x|^2)^{\alpha+1}dx\nonumber\\
				&=-\frac{(n_1-2)\lambda}{r}\int u_A(2\partial_0 u_A-\lambda u_A)(r^2-|x|^2)^{\alpha+1}dx\nonumber\\
				&=-\frac{2(n_1-2)\lambda}{r}\int u_A\partial_0u_A (r^2-|x|^2)^{\alpha+1}dx +\frac{(n_1-2)\lambda^2}{r}\int u_A^2 (r^2-|x|^2)^{\alpha+1}dx\nonumber\\
				&=-\frac{2(n_1-2)(\alpha+1)\lambda}{r}\int u_A^2 x_0 (r^2-|x|^2)^\alpha dx+\frac{(n_1-2)\lambda^2}{r}\int u_A^2(r^2-|x|^2)^{\alpha+1}dx\nonumber\\
				&\geq -2(n_1-2)(\alpha+1)|\lambda| \int u_A^2(r^2-|x|^2)^\alpha dx,
				\end{align}
		and
		\begin{align*}
				&\frac{2(\alpha+1)}{r}\int |x|^2u_A\Delta u_A (r^2-|x|^2)^\alpha dx\\
				= &\frac{2(\alpha+1)\lambda}{r}\int |x|^2u_A(2\partial_0u_A-\lambda u_A) (r^2-|x|^2)^\alpha dx\\
				= &\frac{2(\alpha+1)\lambda}{r} \int |x|^2 \partial_0u_A^2 (r^2-|x|^2)^\alpha dx -\frac{2(\alpha+1)\lambda^2}{r} \int |x|^2 u_A^2(r^2-|x|^2)^\alpha dx\\
				= &-\frac{4(\alpha +1)\lambda}{r}\int x_0 u_A^2(r^2-|x|^2)^\alpha dx+ \frac{4\alpha(\alpha+1)\lambda}{r}\int |x|^2x_0 u_A^2 (r^2-|x|^2)^{\alpha-1}dx\\
				& -\frac{2(\alpha+1)\lambda^2}{r} \int |x|^2 u_A^2(r^2-|x|^2)^\alpha dx.
			\end{align*}
			By simplifying the coefficients, we obtain
			\begin{align}\label{eq3}
			\begin{split}
			&\frac{2(\alpha+1)}{r}\int |x|^2u_A\Delta u_A (r^2-|x|^2)^\alpha dx\\
				\geq & -4(\alpha+1)|\lambda| \int u^2_A(r^2-|x|^2)^\alpha dx-\frac{2\alpha(\alpha+1)|\lambda|}{r}\int |x|^2|x_0|u_A^2(r^2-|x|^2)^{\alpha-1}dx\\
				&-\frac{2\alpha(\alpha+1)|\lambda|}{r}\int |x|^2|x_0| u_A^2(r^2-|x|^2)^{\alpha-1}dx-\frac{2(\alpha+1)|\lambda|^2}{r} \int |x|^2 u_A^2(r^2-|x|^2)^\alpha dx\\
				\geq & -4(\alpha+1)|\lambda| \int u^2_A(r^2-|x|^2)^\alpha dx- 4\alpha(\alpha+1)|\lambda| r^2\int u^2_A(r^2-|x|^2)^{\alpha-1}dx\\
				&-{2(\alpha+1)|\lambda|^2r} \int u_A^2(r^2-|x|^2)^\alpha dx.
			\end{split}
		\end{align}
		Combining (\ref{eq1}), (\ref{eq2}) and (\ref{eq3}), we have
		\begin{align*}
				&-\frac{2}{r}\sum_A\int \laplace u_A \langle \del u_A,x\rangle (r^2-|x|^2)^{\alpha+1}
				-\frac{n_1-2}{r}\sum_A\int u_A\laplace u_A (r^2-|x|^2)^{\alpha+1}dx \\
				&+ \frac{2(\alpha+1)}{r}\sum_A \int |x|^2u_A\laplace u_A (r^2-|x|^2)^\alpha dx  \\
				\geq & -4|\lambda| \sum_A\int |\del u_A|^2 (r^2-|x|^2)^{\alpha+1}dx -|\lambda|^2 \sum_A\int u^2_A(r^2-|x|^2)^{\alpha+1}dx\\
				&-|\lambda|^2\sum_A\int u_A^2(r^2-|x|^2)^{\alpha+1}dx-2(n_1-2)(\alpha+1)|\lambda| \sum_A\int u_A^2(r^2-|x|^2)^\alpha dx\\
				&-4(\alpha+1)|\lambda| \sum_A\int u^2_A(r^2-|x|^2)^\alpha dx- 4\alpha(\alpha+1)|\lambda| r^2\sum_A\int u^2_A(r^2-|x|^2)^{\alpha-1}dx\\
				 &-{2(\alpha+1)|\lambda|^2r} \int u_A^2(r^2-|x|^2)^\alpha dx\\
				 \end{align*}
\begin{align}\label{tem1}
\begin{split}
				= &-4|\lambda| \sum_A\int |\del u_A|^2 (r^2-|x|^2)^{\alpha+1}dx -4|\lambda|\sum_A\int u_A\laplace u_A (r^2-|x|^2)^{\alpha+1}dx\\
				&+4|\lambda| \sum_A \int u_A\laplace u_A (r^2-|x|^2)^{\alpha+1}dx-|\lambda|^2 \sum_A\int u^2_A(r^2-|x|^2)^{\alpha+1}dx\\
				&-|\lambda|^2\sum_A\int u_A^2(r^2-|x|^2)^{\alpha+1}dx-2(n_1-2)(\alpha+1)|\lambda| \sum_A\int u_A^2(r^2-|x|^2)^\alpha dx
				\\
				&-4(\alpha+1)|\lambda| \sum_A\int u^2_A(r^2-|x|^2)^\alpha dx- 4\alpha(\alpha+1)|\lambda| r^2\sum_A\int u^2_A(r^2-|x|^2)^{\alpha-1}dx\\
				&-{2(\alpha+1)|\lambda|^2r} \int u_A^2(r^2-|x|^2)^\alpha dx,
			\end{split}
		\end{align}
		which gives that
		\begin{align*}
			&(\ref{tem1})\\
            = &-4|\lambda| I(r)+8(\alpha+1)|\lambda|\lambda\sum_A\int u_A^2x_0(r^2-|x|^2)^\alpha dx -4|\lambda|^3\sum_A\int u_A^2(r^2-|x|^2)^{\alpha+1}\\
			&-|\lambda|^2 \sum_A\int u^2_A(r^2-|x|^2)^{\alpha+1}dx-|\lambda|^2\sum_A\int u_A^2(r^2-|x|^2)^{\alpha+1}dx\\
			&-2(n_1-2)(\alpha+1)|\lambda| \sum_A\int u_A^2(r^2-|x|^2)^\alpha dx
			-4(\alpha+1)|\lambda| \sum_A\int u^2_A(r^2-|x|^2)^\alpha dx\\
			&- 4\alpha(\alpha+1)|\lambda| r^2\sum_A\int u^2_A(r^2-|x|^2)^{\alpha-1}dx-{2(\alpha+1)|\lambda|^2r} \int u_A^2(r^2-|x|^2)^\alpha dx\\
			\geq & -4|\lambda| I(r) -8(\alpha+1)|\lambda|^2 r H(r)-4|\lambda|^3 r^2 H(r)-2|\lambda|^2r^2H(r)
			-2(n_1-2)(\alpha+1)|\lambda| H(r)\\
			&-4(\alpha+1)|\lambda| H(r)-2(\alpha+1)|\lambda| r H^\prime(r)-2(\alpha+1)|\lambda|^2 rH(r)\\
			= &-4|\lambda| I(r)-(10(\alpha+1)|\lambda|^2 r+4|\lambda|^3r^2+2|\lambda|^2 r^2+2n_1(\alpha+1)|\lambda|) H(r) \\
			&-2(\alpha+1)|\lambda| r\left(\frac{2\alpha+n_1}{r}H(r)+\frac{1}{r(\alpha+1)}I(r)\right)\\
			=&-6|\lambda| I(r)-(10(\alpha+1)|\lambda|^2 r+4|\lambda|^3r^2+2|\lambda|^2 r^2+2n_1(\alpha+1)|\lambda|\\
            &+2(\alpha+1)(2\alpha+n_1)|\lambda|) H(r).
		\end{align*}
		
		Therefore, we have
		\begin{align*}
			& I^\prime(r)\\
            \geq & \frac{2\alpha+n_1}{r}I(r)+\frac{4(\alpha+1)}{r}\sum_A\int \langle \del u_A,x\rangle^2 (r^2-|x|^2)^\alpha dx-6\lambda I(r)\\
			&-(10(\alpha+1)|\lambda|^2 r+4|\lambda|^3r^2+2|\lambda|^2 r^2+2n_1(\alpha+1)|\lambda|+2(\alpha+1)(2\alpha+n_1)|\lambda|) H(r).
		\end{align*}
		Consequently,
		\begin{align*}
			N^\prime(r) = &\frac{1}{H^2}\left(I^\prime(r)H(r)-I(r)H^\prime(r)\right)\\
			 \geq &\frac{1}{H^2}\frac{4(\alpha+1)}{r}\left(H\sum_A\int \langle \del u_A,x\rangle^2 (r^2-|x|^2)^\alpha dx -\frac{1}{4(\alpha+1)^2}I^2\right)\\
			&-6|\lambda| N(r)-(10(\alpha+1)|\lambda|^2 r+4|\lambda|^3r^2+2|\lambda|^2 r^2+4(\alpha+1)(\alpha+n_1)|\lambda|).
		\end{align*}
		Let $K=\sum_A K_A=\sum_A \int \langle \del u_A,x\rangle^2(r^2-|x|^2)^\alpha dx$. By Cauchy-Schwarz's inequality, we have
		\begin{align*}
			\frac{1}{4(\alpha+1)^2}I_A^2=&\left(\int \langle x,\del u_A\rangle u_A(r^2-|x|^2)^\alpha\right)^2\\
			\leq &\int \langle x,\del u_A\rangle^2 (r^2-|x|^2)^\alpha dx \int u_A^2 (r^2-|x|^2)^\alpha dx\\ 
			=& K_A H_A.
		\end{align*}
		Consequently,
		\begin{align*}
			\frac{1}{4(\alpha+1)^2}I^2 & = \frac{1}{4(\alpha+1)^2}\left(\sum_A I_A^2 +2\sum_{|A|<|B|}I_AI_B\right)\\
			&\leq \frac{1}{4(\alpha+1)^2}\left(\sum_A I_A^2 +2\sum_{|A|<|B|}|I_A| |I_B|\right)\\
			&\leq \sum_A K_A H_A +2\sum_{|A|<|B|}\sqrt{K_AH_AK_BH_B}\\
			&\leq \sum_A K_A H_A +\sum_{|A|<|B|}(K_AH_B+K_BH_A)\\
			&= (\sum_A K_A)(\sum_B H_B)\\
			&= KH.
		\end{align*}
		
		Therefore,
		\begin{align*}
			N^\prime(r) \geq  -6|\lambda| N(r)-(10(\alpha+1)|\lambda|^2 r+4|\lambda|^3r^2+2|\lambda|^2 r^2+4(\alpha+1)(\alpha+n_1)|\lambda|),
		\end{align*}
		which implies that $e^{6|\lambda| r}(N(r)+p(r)) $ is nondescreasing for $\lambda\neq 0,$ where
		$p(r)=ar^2+br+c$ with $a=\frac{2|\lambda|^2+|\lambda|}{3},b=\frac{5(\alpha+1)}{3}|\lambda|-\frac{2|\lambda|+1}{9}$ and $c=\frac{2(\alpha+1)(\alpha+n_1)}{3}-\frac{5(\alpha+1)}{18}+\frac{2|\lambda|+1}{54|\lambda|}.$
		
		When $\lambda =0,$ it becomes $Du=0$, which means that $u$ is monogenic. The estimate of $I^\prime(r)$ is given as follows, i.e.,
		\begin{align*}
			I^\prime(r)\geq \frac{2\alpha+n_1}{r} I(r)+\frac{4(\alpha+1)}{r}\sum_A\int \langle \del u_A, x\rangle^2 (r^2-|x|^2)^\alpha dx,
		\end{align*}
		and consequently,
		\begin{align*}
			N^\prime(r) &= \frac{1}{H^2} (I^\prime(r)H(r)-I(r)H^\prime(r))\\
			& \geq \frac{1}{H^2} \frac{4(\alpha+1)}{r}\left(H\sum_A\int \langle \del u_A,x\rangle^2(r^2-|x|^2)^\alpha dx-\frac{1}{4(\alpha+1)^2}I^2\right)\\
			&\geq 0.
		\end{align*} 
		Thus $N(r)$ is nondescreasing.
	\end{proof}
	\section{Three balls inequality}
	In this section we will prove the main results.
	
	\noindent{\textbf{Proof of Theorem \ref{three-balls-L2}.}
		Define 
		\begin{align*}
			h(r)=\int_{B_r}|u(x)|^2 dx,
		\end{align*}
		where $B_r$ is the ball centering at the origin with radius $r$. 
		There hold
		\begin{align}\label{h1}
			H(r)\leq r^{2\alpha} h(r)
		\end{align}
		and
		\begin{align}\label{h2}
			H(2r)\geq 3^\alpha r^{2\alpha} h(r),
		\end{align}
		where (\ref{h2}) follows from the fact
		\begin{align*}
			h(r)=\int_{B_r}|u|^2 dx \leq \int_{B_{r}}|u|^2(1+\frac{r^2-|x|^2}{4r^2-r^2})^\alpha dx\leq \int_{B_{2r}}|u|^2(1+\frac{r^2-|x|^2}{4r^2-r^2})^\alpha=\frac{H(2r)}{3^\alpha r^{2\alpha}}.
		\end{align*}
		By (\ref{Hr}), we have that
		\begin{align*}
			\frac{H^\prime(r)}{H(r)}=\frac{2\alpha+n_1}{r} + \frac{N(r)}{r(\alpha+1)}.
		\end{align*}
		Integrating the above equality from $r_1$ to $2r_2$, we have
		\begin{align*}
			\log \frac{H(2r_2)}{H(r_1)} =& \int_{r_1}^{2r_2} \frac{2\alpha+n_1}{r} dr + \int_{r_1}^{2r_2}\frac{N(r)}{r(\alpha+1)}dr\\
			=& (2\alpha+n_1)\log \frac{2r_2}{r_1} +\int_{r_1}^{2r_2}\frac{e^{6\lambda r}(N(r)+p(r))}{r(\alpha+1)e^{6\lambda r}}dr -\int_{r_1}^{2r_2} \frac{ar^2+br+c}{r(\alpha+1)}dr.
		\end{align*}
		Thus
		\begin{align*}
			\log \frac{H(2r_2)}{H(r_1)}\leq &(2\alpha+n_1)\log \frac{2r_2}{r_1} + \frac{e^{12\lambda r_2}(N(2r_2)+p(2r_2))}{(\alpha+1)e^{6\lambda r_1}}\log \frac{2r_2}{r_1} -\frac{c}{\alpha+1}\log \frac{2r_2}{r_1}\\
			& -\frac{1}{\alpha+1}\left(\frac{a}{2}((2r_2)^2-r_1^2)+b(2r_2-r_1)\right).
		\end{align*}
		Similarly, integrating from $2r_2$ to $r_3$, we have
		\begin{align*}
			\log \frac{H(r_3)}{H(2r_2)} &= \int_{2r_2}^{r_3} \frac{2\alpha+n_1}{r} dr + \int_{2r_2}^{r_3}\frac{N(r)}{r(\alpha+1)} dr\\
			&=(2\alpha+n_1)\log \frac{r_3}{2r_2} + \int_{2r_2}^{r_3} \frac{e^{6\lambda r}(N(r)+p(r)}{r(\alpha+1)e^{6\lambda r}} dr -\int_{2r_2}^{r_3} \frac{ar^2+br+c}{r(\alpha+1)}dr.
		\end{align*}
		Then,
		\begin{align*}
			\log\frac{H(r_3)}{H(2r_2)}\geq & (2\alpha+n_1)\log\frac{r_3}{2r_2} +\frac{e^{12\lambda r_2}(N(2r_2)+p(2r_2)}{(\alpha+1)e^{6\lambda r_3}}\log \frac{r_3}{2r_2} - \frac{c}{\alpha+1}\log \frac{r_3}{2r_2}\\
			&-\frac{1}{\alpha+1}\left(\frac{a}{2}(r_3^2-(2r_2)^2)+b(r_3-2r_2)\right).
		\end{align*}
		Hence,
		\begin{align*}
			&\frac{\log \frac{H(2r_2)}{H(r_1)}}{\log \frac{2r_2}{r_1}} + \frac{c}{\alpha+1}-(2\alpha+n_1)+\frac{1}{\log\frac{2r_2}{r_1}(\alpha+1)}\left(\frac{a}{2}((2r_2)^2-r_1^2)+b(2r_2-r_1)\right)\\
			\leq &\frac{\log \frac{H(r_3)}{H(2r_2)}}{\log \frac{r_3}{2r_2}}+ \frac{c}{\alpha+1} -(2\alpha+n_1)+\frac{1}{\log\frac{r_3}{2r_2}(\alpha+1)}\left(\frac{a}{2}(r_3^2-(2r_2)^2)+b(r_3-2r_2)\right),
		\end{align*}
		and consequently,
		\begin{align*}
			\log \left(\frac{H(2r_2)}{H(r_1)}\right)^{\frac{1}{\log\frac{2r_2}{r_1}}}\leq &\log \left(\frac{H(r_3)}{H(2r_2)}\right)^{\frac{1}{\log \frac{r_3}{2r_2}}} +\frac{1}{\log\frac{r_3}{2r_2}(\alpha+1)}\left(\frac{a}{2}(r_3^2-(2r_2)^2+b(r_3-2r_2)\right)\\
			&-\frac{1}{\log\frac{2r_2}{r_1}(\alpha+1)}\left(\frac{a}{2}((2r_2)^2-r_1^2)+b(2r_2-r_1)\right),
		\end{align*}
		which is equivalent to
		\begin{align*}
			&(H(2r_2))^{\frac{1}{\log\frac{2r_2}{r_1}}+\frac{1}{\log \frac{r_3}{2r_2}}} \\
         \leq & e^{\frac{\left(\frac{a}{2}(r_3^2-(2r_2)^2)+b(r_3-2r_2)\right)}{(\alpha+1)\log\frac{r_3}{2r_2}}-\frac{\left(\frac{a}{2}((2r_2)^2-r_1^2)+b(2r_2-r_1)\right)}{(\alpha+1)\log\frac{2r_2}{r_1}}} (H(r_1))^{\frac{1}{\log\frac{2r_2}{r_1}}}(H(r_3))^{\frac{1}{\log \frac{r_3}{2r_2}}}.
		\end{align*}
		Combining the above inequality with (\ref{h1}) and (\ref{h2}), we have that
		\begin{align*}
			&3^{\alpha (C_1+C_2)}r_2^{2\alpha(C_1+C_2)}(h(r_2))^{C_1+C_2}\\
			\leq & e^{\frac{\left(\frac{a}{2}(r_3^2-(2r_2)^2)+b(r_3-2r_2)\right)}{(\alpha+1)C_2}-\frac{\left(\frac{a}{2}((2r_2)^2-r_1^2)+b(2r_2-r_1)\right)}{(\alpha+1)C_1}}r_1^{2C_1\alpha}r_3^{2C_2\alpha} (h(r_1))^{C_1}(h(r_3))^{C_2}.
		\end{align*}
		This implies that
		\begin{align*}
			\int_{B_{r_2}} |u|^2 dx \leq C_3 \left(\int_{B_{r_1}} |u|^2dx\right)^{\frac{C_1}{C_1+C_2}} \left(\int_{B_{r_3}} |u|^2dx\right)^{\frac{C_2}{C_1+C_2}},
		\end{align*}
		where $C_1=\frac{1}{\log\frac{2r_2}{r_1}}, C_2=\frac{1}{\log \frac{r_3}{2r_2}}$ and $$C_3=\frac{r_1^{2\alpha\frac{C_1}{C_1+C_2}}r_3^{2\alpha\frac{C_2}{C_1+C_2}}}{3^{\alpha}r_2^{2\alpha}}e^{\frac{\left(\frac{a}{2}(r_3^2-(2r_2)^2)+b(r_3-2r_2)\right)}{(\alpha+1)C_2(C_1+C_2)}-\frac{\left(\frac{a}{2}((2r_2)^2-r_1^2)+b(2r_2-r_1)\right)}{(\alpha+1)C_1(C_1+C_2)}}.$$
		
		In particular, when $\lambda=0,$ we have that
		\begin{align*}
			H(2r_2)^{\frac{1}{\log\frac{2r_2}{r_1}}+\frac{1}{\log\frac{r_3}{2r_2}}}\leq H(r_1)^\frac{1}{\log \frac{2r_2}{r_1}} H(r_3)^\frac{1}{\log \frac{r_3}{2r_2}}.
		\end{align*} 
		Consequently, the three balls inequality becomes
		\begin{align*}
			\int_{B_{r_2}} |u|^2 dx \leq C_4\left(\int_{B_{r_1}}|u|^2dx\right)^\frac{C_1}{C_1+C_2}\left(\int_{B_{r_3}}|u|^2dx\right)^\frac{C_2}{C_1+C_2},
		\end{align*}
		where $C_1=\frac{1}{\log\frac{2r_2}{r_1}},$ $C_2=\frac{1}{\log \frac{r_3}{2r_2}}$ and $C_4=\frac{r_1^{2\alpha\frac{C_1}{C_1+C_2}}r_3^{2\alpha\frac{C_2}{C_1+C_2}}}{3^{\alpha}r_2^{2\alpha}}.$
		\endproof

\bigskip 
  
		\noindent{\textbf{Proof of Corollary \ref{three-spheres-infty}.}}
		When $\lambda=0,$ we can easily deduce the three balls inequality in the $L^\infty$-norm from Theorem \ref{three-balls-L2}. In fact, by the subharmonicity of $|u|^2$ we have that, for $x\in B_\rho,$
		\begin{align*}
			|u(x)|^2 \leq \frac{\Gamma(\frac{n_1}{2}+1)}{\pi^{\frac{n_1}{2}}r^{n_1}} \int_{B_r(x)} |u(y)|^2 dy\leq \frac{\Gamma(\frac{n_1}{2}+1)}{\pi^{\frac{n_1}{2}}r^{n_1}} \int_{B_{r+\rho}} |u(y)|^2 dy,
		\end{align*}
		where $\Gamma(\cdot)$ is the Gamma function. 
		This implies that
		\begin{align*}
			||u||_{L^\infty(B_\rho)}\leq \left(\frac{\Gamma(\frac{n_1}{2}+1)}{\pi^\frac{n_1}{2}}\right)^\frac{1}{2} r^{-\frac{n_1}{2}} ||u||_{L^2(B_{r+\rho})},
		\end{align*}
		or equivalently,
		\begin{align*}
			||u||_{L^\infty(B_\rho)}\leq \left(\frac{\Gamma(\frac{n_1}{2}+1)}{\pi^\frac{n_1}{2}}\right)^\frac{1}{2}(\delta-\rho)^{-\frac{n_1}{2}}||u||_{L^2(B_\delta)}.
		\end{align*}
		Since $r_3> 2r_2>r_2>r_1,$ we have
		\begin{align*}
			||u||_{L^\infty(B_{r_2})}\leq \left(\frac{\Gamma(\frac{n_1}{2}+1)}{\pi^\frac{n_1}{2}}\right)^\frac{1}{2}\left(\frac{r_3+r_2}{3}-r_2\right)^{-\frac{n_1}{2}}||u||_{L^2(B_\frac{r_3+r_2}{3})}.
		\end{align*}
		Note that $r_3>\frac{2(r_3+r_2)}{3}>\frac{r_3+r_2}{3}>r_2>r_1.$
		Then, we have
		\begin{align*}
			||u||_{L^2(B_\frac{r_3+r_2}{3})}\leq C_4^\prime ||u||_{L^2({B_{r_1}})}^{\frac{C_1^\prime}{C_1^\prime+C_2^\prime}} ||u||_{L^2(B_{r_3})}^{\frac{C_2^\prime}{C_1^\prime+C_2^\prime}},
		\end{align*}
		where $C_1^\prime=\frac{1}{\log \frac{2(r_3+r_2)}{3r_1}},$ $C_2^\prime= \frac{1}{\log \frac{3r_3}{2(r_3+r_2)}}$ and $C_4^\prime=\frac{3^\alpha r_1^{2\alpha\frac{C_1}{C_1+C_2}}r_3^{2\alpha\frac{C_2}{C_1+C_2}}}{4^\alpha((r_3+r_2))^{2\alpha}}.$
  
		Hence, 
		\begin{align*}
			&||u||_{L^\infty(B_{r_2})} \\
            \leq & 3^\frac{n_1}{2}\left(\frac{\Gamma(\frac{n_1}{2}+1)}{\pi^\frac{n_1}{2}}\right)^\frac{1}{2}C_4^\prime (r_3-2r_2)^{-\frac{n_1}{2}}||u||_{L^2({B_{r_1}})}^{\frac{C_1^\prime}{C_1^\prime+C_2^\prime}} ||u||_{L^2(B_{r_3})}^{\frac{C_2^\prime}{C_1^\prime+C_2^\prime}}\\
			\leq & 3^\frac{n}{2}\left(\frac{\Gamma(\frac{n}{2}+1)}{\pi^\frac{n}{2}}\right)^\frac{1}{2}C_4^\prime (r_3-2r_2)^{-\frac{n}{2}}|B_{r_1}|^\frac{C_1^\prime}{2(C_1^\prime+C_2^\prime)}|B_{r_3}|^\frac{C_2^\prime}{2(C_1^\prime+C_2^\prime)}||u||_{L^\infty({B_{r_1}})}^{\frac{C_1^\prime}{C_1^\prime+C_2^\prime}} ||u||_{L^\infty(B_{r_3})}^{\frac{C_2^\prime}{C_1^\prime+C_2^\prime}}\\
			\leq & 3^\frac{n_1}{2}\left(\frac{\Gamma(\frac{n_1}{2}+1)}{\pi^\frac{n_1}{2}}\right)^\frac{1}{2}C_4^\prime (r_3-2r_2)^{-\frac{n_1}{2}}\left(\frac{\pi^\frac{n_1}{2}}{\Gamma(\frac{n_1}{2}+1)}r_3^{n_1}\right)^\frac{1}{2}||u||_{L^\infty({B_{r_1}})}^{\frac{C_1^\prime}{C_1^\prime+C_2^\prime}} ||u||_{L^\infty(B_{r_3})}^{\frac{C_2^\prime}{C_1^\prime+C_2^\prime}}\\
			\leq & 3^\frac{n_1}{2} C_4^\prime (r_3-2r_2)^{-\frac{n_1}{2}}r^\frac{n_1}{2}_3||u||_{L^\infty({B_{r_1}})}^{\frac{C_1^\prime}{C_1^\prime+C_2^\prime}} ||u||_{L^\infty(B_{r_3})}^{\frac{C_2^\prime}{C_1^\prime+C_2^\prime}}.
		\end{align*}
		\endproof

        To prove Corollary \ref{three-spheres-infty1}, we need the following lemma.
        \begin{lemma}\label{Moser}
			Suppose that $Du=\lambda u$ with $\lambda\neq 0.$ Then, for $0<r<R<1,$ there holds
			\begin{align*}
				||u||_{L^\infty(B_r)} \leq \widetilde M (R-r)^{-\frac{n_1}{2}} ||u||_{L^2(B_R)},
			\end{align*}
			where $\widetilde M$ is a positive constant.
		\end{lemma}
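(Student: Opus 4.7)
The plan is to reduce Lemma \ref{Moser} to classical Moser iteration for a scalar elliptic subsolution. The key observation is that although $u$ is Clifford-valued, the nonnegative scalar function $w := |u|^2 = \sum_A u_A^2$ satisfies a differential inequality of standard elliptic type.

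First I would invoke the componentwise identity $\laplace u_A = 2\lambda\pa u_A - \lambda^2 u_A$ already established in (\ref{lapU}) to compute, by the product rule,
\begin{align*}
\laplace w = 2\sum_A \bigl(u_A \laplace u_A + |\del u_A|^2 \bigr) = 2\lambda\,\pa w - 2\lambda^2 w + 2|\del u|^2.
\end{align*}
Since the gradient term is nonnegative, dropping it gives
\[
\laplace w - 2\lambda\,\pa w + 2\lambda^2 w \geq 0,
\]
so $w\geq 0$ is a classical subsolution of the constant-coefficient uniformly elliptic equation $\laplace v - 2\lambda\,\pa v + 2\lambda^2 v = 0$ on $\mathbb R^{n_1}$. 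The drift and zero-order coefficients are bounded by $2|\lambda|$ and $2\lambda^2$ respectively, so on any ball $B_R \subset B_1$ they are controlled by a constant depending only on $\lambda$.

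Next I would apply the classical Moser local boundedness estimate for nonnegative subsolutions of uniformly elliptic equations with bounded lower-order coefficients, as found for instance in Han--Lin \cite{Han-Lin}. This yields, for any $0 < r < R < 1$,
\[
\sup_{B_r} w \;\leq\; C(n,|\lambda|)\,(R-r)^{-n_1}\int_{B_R} w\,dx.
\]
The proof of this estimate, which I would either cite or sketch, follows the standard template: test the inequality against $\eta^2 w^{p-1}$ with a Lipschitz cutoff $\eta$ supported between $B_r$ and $B_R$, use Sobolev's inequality to upgrade $L^p$ control to $L^{p\cdot n_1/(n_1-2)}$ control, and iterate along the geometric sequence $p_k=(n_1/(n_1-2))^k$ with suitably chosen shrinking radii so the cutoff losses form a convergent geometric series. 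Substituting back $w=|u|^2$ and taking square roots then produces
\[
\|u\|_{L^\infty(B_r)} \leq \widetilde M\,(R-r)^{-n_1/2}\,\|u\|_{L^2(B_R)},
\]
with $\widetilde M = C(n,|\lambda|)^{1/2}$.

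There is no substantive obstacle here beyond correctly packaging the Moser black box; the restriction $R<1$ is used only so that the radius and the $L^\infty$-norms of the (constant) coefficients can be absorbed into a single constant $\widetilde M$ depending only on $n$ and $\lambda$. Smoothness of $w$, needed to interpret the subsolution inequality pointwise, is automatic since $u_A$ solves the constant-coefficient elliptic equation $\laplace u_A = 2\lambda\pa u_A - \lambda^2 u_A$ and is therefore real-analytic by standard elliptic regularity.
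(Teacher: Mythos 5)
Your proposal is correct and follows essentially the same route as the paper: both use the componentwise identity (\ref{lapU}) to reduce the claim to the classical Moser local boundedness estimate cited from \cite{Han-Lin}. The only (harmless) difference is that the paper applies the scalar estimate to each component $u_A$, which solves an elliptic equation with bounded lower-order coefficients, and then sums over $A$, whereas you apply it once to the nonnegative subsolution $w=|u|^2$.
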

		
		\begin{proof}
			Firstly, for all possible index $A,$ we note that
			\begin{align*}
				\laplace u_A= 2\lambda\partial_0 u_A-\lambda^2 u_A.
			\end{align*}
			By using the standard Moser iteration method for such $u_A$ (see e.g. \cite{Han-Lin}), there exists a positive constant $\widetilde M$ such that
			\begin{align*}
				||u_A||_{L^\infty(B_r)} \leq \widetilde M (R-r)^{-\frac{n_1}{2}} ||u_A||_{L^2(B_R)}.
			\end{align*}
			Therefore,
			\begin{align*}
				||u||_{L^\infty(B_r)} \leq \widetilde M (R-r)^{-\frac{n_1}{2}} ||u||_{L^2(B_R)}.
			\end{align*}
        \end{proof}
        \bigskip
        
		\noindent{\textbf{Proof of Corollary \ref{three-spheres-infty1}.}}
		By Lemma \ref{Moser}, we have that, for $0<r_1<r_2<2r_2<r_3<1,$
		\begin{align*}
			||u||_{L^\infty(B_{r_2})}\leq \widetilde M\left(\frac{r_3+r_2}{3}-r_2\right)^{-\frac{n_1}{2}}||u||_{L^2(B_\frac{r_3+r_2}{3})},
		\end{align*}
		where $\widetilde M$ is a positive constant given in Lemma \ref{Moser}.
		Note that $1>r_3>\frac{2(r_3+r_2)}{3}>\frac{r_3+r_2}{3}>r_2>r_1.$
		Then, applying Theorem \ref{three-balls-L2}, we have that
		\begin{align*}
			||u||_{L^2(B_\frac{r_3+r_2}{3})}\leq C_3^\prime ||u||_{L^2({B_{r_1}})}^{\frac{C_1^\prime}{C_1^\prime+C_2^\prime}} ||u||_{L^2(B_{r_3})}^{\frac{C_2^\prime}{C_1^\prime+C_2^\prime}},
		\end{align*}
		where $C_1^\prime=\frac{1}{\log \frac{2(r_3+r_2)}{3r_1}},$ $C_2^\prime= \frac{1}{\log \frac{3r_3}{2(r_3+r_2)}}$\\
		\noindent and $C_3^\prime=\frac{3^\alpha r_1^{2\alpha\frac{C_1}{C_1+C_2}}r_3^{2\alpha\frac{C_2}{C_1+C_2}}}{4^\alpha((r_3+r_2))^{2\alpha}}e^{\frac{\left(\frac{a}{2}(r_3^2-(\frac{2(r_3+r_2)}{3})^2)+b(r_3-\frac{2(r_3+r_2)}{3})\right)}{(\alpha+1)C_2(C_1+C_2)}-\frac{\left(\frac{a}{2}((\frac{2(r_3+r_2)}{3})^2-r_1^2)+b(\frac{2(r_3+r_2)}{3}-r_1)\right)}{(\alpha+1)C_1(C_1+C_2)}}.$
		
		Therefore, 
		\begin{align*}
			||u||_{L^\infty(B_{r_2})} &\leq 3^\frac{n_1}{2}\widetilde MC_3^\prime (r_3-2r_2)^{-\frac{n_1}{2}}||u||_{L^2({B_{r_1}})}^{\frac{C_1^\prime}{C_1^\prime+C_2^\prime}} ||u||_{L^2(B_{r_3})}^{\frac{C_2^\prime}{C_1^\prime+C_2^\prime}}\\
			&\leq  3^\frac{n_1}{2}\widetilde MC_3^\prime (r_3-2r_2)^{-\frac{n_1}{2}}|B_{r_1}|^\frac{C_1^\prime}{2(C_1^\prime+C_2^\prime)}|B_{r_3}|^\frac{C_2^\prime}{2(C_1^\prime+C_2^\prime)}||u||_{L^\infty({B_{r_1}})}^{\frac{C_1^\prime}{C_1^\prime+C_2^\prime}} ||u||_{L^\infty(B_{r_3})}^{\frac{C_2^\prime}{C_1^\prime+C_2^\prime}}\\
			&\leq  M^\prime C_3^\prime (r_3-2r_2)^{-\frac{n_1}{2}}r^\frac{n_1}{2}_3||u||_{L^\infty({B_{r_1}})}^{\frac{C_1^\prime}{C_1^\prime+C_2^\prime}} ||u||_{L^\infty(B_{r_3})}^{\frac{C_2^\prime}{C_1^\prime+C_2^\prime}}.
		\end{align*}

	\end{document}